\DeclareMathOperator{\Hdim}{\operatorname{\mathbf{\mathbb{I}}}}
\DeclareMathOperator{\rHdim}{\operatorname{\mathsf{red-\mathbb{I}}}}
\def\latex/{{\protect\LaTeX}}
\def\latexe/{{\protect\LaTeXe}}
\def\amslatex/{{\protect\AmS-\protect\LaTeX}}
\def\tex/{{\protect\TeX}}
\def\amstex/{{\protect\AmS-\protect\TeX}}
\def\bibtex/{{Bib\protect\TeX}}
\def\makeindx/{\textit{MakeIndex}}
\theoremstyle{plain} 
\newtheorem{thm}{Theorem}[section]
\newtheorem{prop}[thm]{Proposition}
\newtheorem{cor}[thm]{Corollary}
\theoremstyle{definition}
\newtheorem{chunk}[thm]{\hspace*{-1.065ex}\bf}
\newtheorem{lem}[thm]{Lemma}
\newtheorem{eg}[thm]{Example}
\newtheorem{rmk}[thm]{Remark}
\newcommand{\fm}{\mathfrak{m}}
\newcommand{\iso}{\cong}
\newcommand{\m}{\mathfrak{m}}
\newcommand{\NN}{\mathbb{N}}
\DeclareMathOperator{\id}{id}
\def\Tor{\operatorname{\mathsf{Tor}}}
\def\Ext{\operatorname{\mathsf{Ext}}}
\def\Hom{\operatorname{\mathsf{Hom}}}
\def\depth{\operatorname{\mathsf{depth}}}
\def\pd{\operatorname{\mathsf{pd}}}
\def\Tr{\operatorname{\mathsf{Tr}}}
\def\md{\operatorname{\mathsf{mod}}}
\DeclareMathOperator{\Gdim}{\mathsf{G-dim}}
\DeclareMathOperator{\rGdim}{\operatorname{\mathsf{red-G-dim}}}
\DeclareMathOperator{\rpdim}{\operatorname{\mathsf{red-pd}}}
 \DeclareMathOperator{\syz}{\Omega}
\def\u
\def\urldot{\kern -.10em.\kern -.10em}\def\urlhttp{http\kern
  -.10em\lower -.1ex\hbox{:}\kern -.12em\lower 0ex\hbox{/}\kern
  -.18em\lower 0ex\hbox{/}} 
\newcommand{\bb}{\left[ \begin{smallmatrix}}
\newcommand{\eb}{\end{smallmatrix} \right]}
\begin{document}
\baselineskip=15pt
\baselineskip=15pt
\title[On modules with finite reducing Gorenstein dimension]{On modules with finite reducing Gorenstein dimension}

\author{Tokuji Araya}
\address{Tokuji Araya \\ Department of Applied Science, Faculty of Science, Okayama University of Science, Ridaicho, Kitaku, Okayama 700-0005, Japan.}
\email{araya@das.ous.ac.jp}

\author{Olgur Celikbas}
\address{Olgur Celikbas\\ Department of Mathematics, West Virginia University, 
Morgantown, WV 26506 U.S.A}
\email{olgur.celikbas@math.wvu.edu}

\author{Jesse Cook}
\address{Jesse Cook \\ Department of Mathematics, West Virginia University, 
Morgantown, WV 26506 U.S.A}
\email{jcook27@mix.wvu.edu}

\author{Toshinori Kobayashi} 
\address{Toshinori Kobayashi \\ Graduate School of Mathematics, Nagoya University, Furocho, Chikusaku, Nagoya, Aichi 464-8602, Japan}
\email{m16021z@math.nagoya-u.ac.jp}

\thanks{2010 {\em Mathematics Subject Classification.} Primary 13D07; Secondary 13C13, 13C14, 13H10}
\keywords{Gorenstein dimension, reducing dimensions, injective dimension}
\thanks{Toshinori Kobayashi was partly supported by JSPS Grant-in-Aid for JSPS Fellows 18J20660}

\begin{abstract} If $M$ is a nonzero finitely generated module over a commutative Noetherian local ring $R$ such that $M$ has finite injective dimension and finite Gorenstein dimension, then it follows from a result of Holm that $M$ has finite projective dimension, and hence a result of Foxby implies that $R$ is Gorenstein. We investigate whether the same conclusion holds for nonzero finitely generated modules that have finite injective dimension and finite reducing Gorenstein dimension, where the reducing Gorenstein dimension is a finer invariant than the classical Gorenstein dimension, in general.
\end{abstract}

\maketitle

\section{Introduction}

Throughout $R$ denotes a commutative Noetherian local ring with unique maximal ideal $\fm$ and residue field $k$, and modules over $R$ are assumed to be finitely generated.

In 1969 Levin and Vascensolos \cite[2.2]{LV} proved that, if $R$ is a Gorenstein ring, then an $R$-module has finite projective dimension if and only if it has finite injective dimension. Subsequently, in 1977 Foxby \cite[4.4]{Foxby77} proved a surprising converse: if $R$ admits a nonzero module of finite projective and finite injective dimension, then $R$ must be Gorenstein. Nearly three decades later, Holm \cite[2.2]{Holm} improved Foxby's result by considering modules (not necessarily finitely generated) of finite Gorenstein projective dimension. Holm's result \cite{Holm}, in the local case, implies that, if $M$ is an $R$-module of finite injective dimension, then the projective dimension of $M$ equals the Gorenstein dimension of $M$; see \ref{Gdim}. In the local setting, the results of Foxby and Holm from the foregoing discussion are summarized as: 

\begin{thm} \label{Holm} (Foxby \cite[4.4]{Foxby77} and Holm \cite[2.2]{Holm}) Let $R$ be a local ring and let $M$ be a nonzero $R$-module such that $\id_R(M)<\infty$. Then the following hold:
\begin{enumerate}[\rm(i)]
\item $\Gdim_R(M)=\pd_R(M)$. 
\item If $\Gdim_R(M)<\infty$, then $R$ is Gorenstein.
\end{enumerate}
\end{thm}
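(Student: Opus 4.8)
The plan is to prove part (i) and then read off part (ii) from it together with Foxby's theorem.

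For (i), recall first that $\Gdim_R(M)\le\pd_R(M)$ holds for every module, with equality whenever $\pd_R(M)<\infty$; this is part of the basic Auslander--Bridger theory. Hence the case $\Gdim_R(M)=\infty$ is automatic, and it suffices to show that $\Gdim_R(M)<\infty$ forces $\pd_R(M)<\infty$ --- once this is known, both invariants equal $\depth R-\depth_R M$ by the Auslander--Bridger and Auslander--Buchsbaum formulas. Since $M$ is finitely generated over a Noetherian local ring, its Gorenstein dimension agrees with its Gorenstein projective dimension, so the required finiteness is exactly Holm's theorem \cite[2.2]{Holm}: a module of finite Gorenstein projective dimension and finite injective dimension has its Gorenstein projective dimension equal to its projective dimension. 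This yields $\pd_R(M)=\Gdim_R(M)<\infty$.

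For a route to (i) that does not treat Holm's theorem as a black box, I would set $g=\Gdim_R(M)$ and let $K=\syz^g_R(M)$ be the $g$-th syzygy in a minimal free resolution, so $K$ is totally reflexive and it is enough to prove $K=0$. Since $\id_R(M)<\infty$ and $M\ne0$, Bass's theorem makes $R$ Cohen--Macaulay with $\id_R(M)=\depth R$, and the Auslander--Bridger formula computes $\depth_R M$; after completing (harmless here) I may assume $R$ has a canonical module $\omega$. By Foxby equivalence the hypothesis $\id_R(M)<\infty$ means $N:=\Hom_R(\omega,M)$ has finite projective dimension, $\Tor^R_{>0}(\omega,N)=0$, and $M\cong\omega\tensor_R N$; in particular $M\simeq\omega\ltensor_R N$ with $N$ perfect, so $\RHom_R(M,R)\simeq\RHom_R(\omega,R)\ltensor_R\RHom_R(N,R)$ with $\RHom_R(N,R)$ perfect, and one concludes that $\Gdim_R(M)$ and $\Gdim_R(\omega)$ are finite together. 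As $\omega$ is maximal Cohen--Macaulay, $\Gdim_R(\omega)<\infty$ forces $\Gdim_R(\omega)=0$, i.e. $\omega$ is totally reflexive; a canonical module that is totally reflexive is free, so $R$ is Gorenstein, and then $\pd_R(M)<\infty$ by Levin--Vasconcelos \cite[2.2]{LV}.

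For (ii), suppose in addition $\Gdim_R(M)<\infty$. By (i), $\pd_R(M)=\Gdim_R(M)<\infty$, so $M$ is a nonzero finitely generated $R$-module of finite projective and finite injective dimension; Foxby's theorem \cite[4.4]{Foxby77} then gives that $R$ is Gorenstein. The crux is the finiteness transfer inside (i): the bound $\Gdim_R(M)\le\pd_R(M)$ is free, but upgrading finite Gorenstein dimension to finite projective dimension under the hypothesis $\id_R(M)<\infty$ is precisely Holm's contribution and genuinely needs it --- either his homological argument or the canonical-module/Foxby-equivalence route above. All the other ingredients (Auslander--Bridger, Auslander--Buchsbaum, Bass's theorem, Levin--Vasconcelos, Foxby's theorem) are standard and merely assemble and interpret the conclusion.
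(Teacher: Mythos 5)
Your proposal is correct in outline but takes a genuinely different route from the paper, and the second, ``self-contained'' route you sketch has a real gap at its central step.

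The paper proves part~(i) in \ref{Holm's} by an elementary module-theoretic argument: with $n=\Gdim_R(M)$, it notes that $\Tr\Omega^n M$ and $\Tr\Omega^{n+1}\Tr\Omega^n M$ are totally reflexive, plugs into the Auslander--Bridger exact sequence of \cite[2.8]{AuBr}, kills the two outer terms using Ischebeck's theorem (for the $\Ext^1(-,M)$ term, since $\id_R(M)<\infty$) and total reflexivity (for the $\Ext^{n+1}(-,R)$ term), deduces $\Tor_1^R(\Tr\Omega^n M,\Omega^n M)=0$, and concludes that $\Omega^n M$ is free. No canonical module, completion, or derived category enters. Your second route instead completes, invokes Bass's theorem and Foxby equivalence to write $M\simeq\omega\ltensor_R N$ with $N$ perfect, and tries to transfer finiteness of $\Gdim$ from $M$ to $\omega$. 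These are different proofs with different prerequisites; the paper's is strictly more elementary.

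The gap in your second route is the sentence ``one concludes that $\Gdim_R(M)$ and $\Gdim_R(\omega)$ are finite together.'' The isomorphism $\RHom_R(M,R)\simeq\RHom_R(\omega,R)\ltensor_R\RHom_R(N,R)$ does \emph{not} by itself yield this. You need two nontrivial cancellation facts for a nonzero perfect complex $P=\RHom_R(N,R)$: (a) if $X\ltensor_R P$ has bounded homology then so does $X$ (an amplitude inequality of Foxby/Iversen/Iyengar type), and (b) if the cone of a biduality map becomes acyclic after $-\ltensor_R P$, then it was already acyclic (a derived Nakayama argument on $D^b_f(R)$). Both are true but are genuine theorems, not formal consequences of the displayed isomorphism; as written the argument assumes exactly what needs to be proved. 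You should either supply these two lemmas with references, or simply adopt the paper's Auslander--Bridger/Ischebeck argument, which sidesteps them entirely. A minor additional slip: with $K=\syz^g_R(M)$ for $g=\Gdim_R(M)$, the goal is to show $K$ is \emph{free}, not $K=0$; when $g>0$ and $\pd_R(M)=g$, the $g$-th syzygy in a minimal resolution is a nonzero free module.

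Your treatment of part~(ii) — deduce $\pd_R(M)<\infty$ from (i) and then quote Foxby — matches the paper exactly, and your first route for (i), which simply cites Holm \cite[2.2]{Holm}, is of course valid as a citation; the paper, however, actually reproves that statement.
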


Araya and Celikbas \cite{CA}, motivated by the work of Alperin \cite{Alp}, Avramov \cite{Av1}, and Bergh \cite{Be, Berghredcx2} on the complexity of modules, introduced and studied the notion of reducing homological dimensions. These homological dimensions have been recently considered in the noncommutative setting by Araya and Takahashi \cite{AT}. In general, a module may have infinite, but finite reducing, homological dimension; see \ref{rGdim} and Example \ref{exCA} for the details.

The main purpose of this paper is to improve Theorem \ref{Holm}: we investigate whether the conclusion of the theorem holds when the Gorenstein dimension and the projective dimension are replaced with their reducing versions. We prove that the conclusion of the first part of Theorem \ref{Holm} also holds for reducing homological dimensions. Moreover, we are able to extend the conclusion of the second part of the theorem for two distinct classes of modules. More precisely, we prove:

\begin{thm} \label{mainthm} Let $R$ be a $d$-dimensional local ring. If $M$ is a nonzero $R$-module such that $\id_R(M)<\infty$, then the following hold:
\begin{enumerate}[\rm(i)]
\item $\rGdim_R(M)=\rpdim_R(M)$.
\item If $\rGdim_R(M) \leq 1$, or $\rGdim_R(M)<\infty$ and $\depth_R(M)\geq d-1$, then $R$ is Gorenstein.
\end{enumerate}
\end{thm}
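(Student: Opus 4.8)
The plan is to reduce both statements to the Cohen--Macaulay, depth‑stable situation that $\id_R(M)<\infty$ forces, and then to a single cohomological input: the vanishing of Tate cohomology against a module of finite injective dimension. First, by Bass's theorem $\id_R(M)<\infty$ makes $R$ Cohen--Macaulay with $\id_R(M)=\depth R=\dim R=d$, so the Auslander--Bridger formula applies to any module of finite Gorenstein dimension; moreover $\rGdim_R(M)\le\rpdim_R(M)$ holds over every ring (a reducing chain for $\rpdim$ ends at a module of finite projective, hence finite Gorenstein, dimension), so only the reverse inequality in (i) uses the hypothesis. I would then prove a depth‑constancy lemma: for a short exact sequence $0\to X^{\oplus a}\to Y\to\Omega^{b}(X)\to 0$ with $a\ge1$ over a Cohen--Macaulay ring one has $\depth_R(Y)=\depth_R(X)$ — the inequality $\ge$ is the depth lemma together with the fact that syzygies do not lower depth, and $\le$ is the complementary depth‑lemma inequality $\depth_R(X)=\depth_R(X^{\oplus a})\ge\min\{\depth_R(Y),\,\depth_R(\Omega^{b}X)+1\}$. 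Hence every module occurring in a reducing chain for $M$ has depth $\depth_R(M)$, so by Auslander--Bridger, resp.\ Auslander--Buchsbaum, a finite value of $\rGdim_R(M)$ or of $\rpdim_R(M)$ is automatically $d-\depth_R(M)$. Thus (i) is equivalent to the implication ``$\rGdim_R(M)<\infty\Rightarrow\rpdim_R(M)<\infty$'', and the two hypotheses of (ii) coalesce: the terminus $N$ of a reducing chain with $\Gdim_R(N)<\infty$ has $\depth_R(N)=\depth_R(M)\ge d-1$, so $\Gdim_R(N)=d-\depth_R(N)\le 1$; it therefore suffices to show that $\rGdim_R(M)\le 1$ and $\id_R(M)<\infty$ together force $R$ to be Gorenstein.

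The cohomological engine I would establish is: \emph{if $\id_R(M)<\infty$ and $\Gdim_R(G)<\infty$, then the Tate cohomology $\widehat{\Ext}^{\,n}_R(G,M)$ vanishes for all $n\in\mathbb Z$}. Indeed, given a complete resolution $C$ of $G$ and a finite injective resolution $0\to M\to I^{0}\to\cdots\to I^{d}\to 0$, each complex $\Hom_R(C,I^{j})$ is acyclic (apply the exact functor $\Hom_R(-,I^{j})$ to the acyclic complex $C$), and a filtration argument along the resolution — breaking it into short exact sequences and using that $C$ consists of finitely generated projectives — then gives $H^{\bullet}(\Hom_R(C,M))=0$. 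I would combine this with the standard fact that $\id_R(M)<\infty$ implies $\sup\{i:\Ext^{i}_R(X,M)\neq 0\}=\depth R-\depth_R(X)$ for every nonzero finitely generated $X$: applied to the totally reflexive syzygy $\Omega^{g}(N)$ (which is maximal Cohen--Macaulay by depth‑constancy), $g=\Gdim_R(N)$, of a terminus $N$ of a reducing chain, it yields $\Ext^{i}_R(\Omega^{g}(N),M)=0$ for $i\ge 1$ and $\Hom_R(\Omega^{g}(N),M)\neq0$. Passing to the completion (harmless for all invariants in sight), $R$ has a canonical module $\omega$, and since $\id_R(M)<\infty$ means $M\simeq\omega\otimes^{\mathbf L}_R P$ for a perfect, nonzero complex $P$, the conditions $\widehat{\Ext}^{\,*}_R(G,M)=0$ and $\widehat{\Ext}^{\,*}_R(G,\omega)=0$ are equivalent, so one may work with $\omega$.

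For (i), the remaining task is to upgrade ``$\Gdim_R(N)<\infty$'' to ``$\pd_R(N)<\infty$'' for a suitable terminus $N$ of a reducing chain for $M$, equivalently to extend an optimal chain until its terminus has finite projective dimension; the intended mechanism is induction on the length of the chain, using at each step the Tate‑vanishing lemma to control the relevant self‑syzygies against $M$ (equivalently $\omega$) and depth‑constancy to keep the equalities $\Gdim=\pd=d-\depth_R(M)$ in force, so that the totally reflexive tail of the terminus can be traded for a free module. For the remaining assertion of (ii), part (i) supplies a reducing chain from $M$ ending at a module $N'$ with $\pd_R(N')\le1$; the goal is then to exhibit a \emph{nonzero} $R$‑module of finite projective and finite injective dimension, which forces $R$ Gorenstein by Foxby, i.e.\ by Theorem~\ref{Holm}(ii). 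A natural route is induction on $d$: choosing $x\in\m$ regular on $R$ and on $M$ — possible since $\depth R=d$ and $\depth_R(M)\ge d-1$ — and passing to $R/(x)$, where $\id$ drops by one and reducing chains descend, reduces to the cases $d\le1$; these are treated directly (when $d=0$, $M$ is a finite direct sum of copies of $E(\sk)$, and one must rule out such a module reducing to a totally reflexive module unless $R$ is self‑injective, i.e.\ Gorenstein).

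The main obstacle is exactly this transfer of finiteness. Neither finite injective nor finite projective dimension propagates along an arbitrary reducing short exact sequence, so a chain cannot be traversed blindly; and the Tate‑vanishing $\widehat{\Ext}^{\,*}_R(N,M)=0$, though necessary, is by itself insufficient for $\pd_R(N)<\infty$: over $R=\sk[x]/(x^{2})$ one has $\widehat{\Ext}^{\,*}_R(\sk,R)=0$ yet $\pd_R(\sk)=\infty$, the resolution of this tension being that $\sk$ is not the terminus of any reducing chain starting from a module of finite injective dimension over that ring. One must therefore use in an essential way that the terminus lies at the end of a reducing chain emanating from a module of finite injective dimension; and it is precisely at this point — in pushing through the chain‑length induction for (i) and the descent on $d$ for (ii) — that the hypotheses $\rGdim_R(M)\le1$ and $\depth_R(M)\ge d-1$ of part (ii) become indispensable.
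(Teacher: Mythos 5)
The proposal has the right flavor in places---Ischebeck's theorem, propagating a vanishing condition along a reducing chain, and an induction on $d$ via a regular element---but it contains one genuine logical error and several unfilled gaps that are exactly where the paper's real work lies.

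The logical error is the claim that the two hypotheses of part (ii) ``coalesce.'' You argue that if $\depth_R(M)\ge d-1$ and $\{K_0,\dots,K_r\}$ is a reducing chain with terminus $N=K_r$, then depth-constancy plus Auslander--Bridger give $\Gdim_R(N)\le 1$, and you then conclude that it suffices to handle ``$\rGdim_R(M)\le 1$.'' But $\rGdim_R(M)\le 1$ means the \emph{length of the chain} is at most $1$, i.e.\ a single short exact sequence $0\to M^{\oplus a}\to K_1\to\Omega^{n}M^{\oplus b}\to 0$ with $\Gdim_R(K_1)<\infty$ exists. Having $\Gdim_R(K_r)\le 1$ for the terminus of a possibly long chain is an entirely different condition and does not force $\rGdim_R(M)\le 1$. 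The paper in fact treats the two hypotheses as separate cases with genuinely different proofs: for $\rGdim_R(M)\le 1$ it amplifies the single defining short exact sequence by iterated syzygy/pushout constructions (Proposition~\ref{p1}) so that the syzygy degree eventually exceeds $d$, making the third term maximal Cohen--Macaulay and splitting the sequence via Ischebeck; for $\depth_R(M)\ge d-1$ it runs the regular-element descent to $d=1$ and then proves a structural claim (via a pushout diagram) that each $K_i$ in the chain decomposes as $M^{\oplus c_i}\oplus L_i$ with $L_i$ zero or maximal Cohen--Macaulay, so that $M$ is a direct summand of $K_r$.

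For part (i), your Tate-cohomology engine is not the right tool because $\widehat{\Ext}^*_R(G,M)$ is only defined when $\Gdim_R(G)<\infty$, and the intermediate modules $K_i$ of a reducing chain do not in general have finite Gorenstein dimension---only the terminus $K_r$ does. What you actually need, and what the paper proves (Lemma~\ref{app-lem}), is that the ordinary condition ``$\Ext^j_R(X,K_i)=0$ for all $j\ge 1$ and all totally reflexive $X$'' propagates along the chain, since $\Ext^j_R(X,\Omega^{n_i}K_{i-1})\cong\Ext^j_R(\Omega^{-n_i}X,K_{i-1})$ and $\Omega^{-n_i}X$ is again totally reflexive. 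Once this is established for $K_r$, the finite projective hull $0\to K_r\to P\to X\to 0$ splits and $\pd_R(K_r)<\infty$, giving $\rpdim_R(M)\le r$. Your sentence about ``trading the totally reflexive tail of the terminus for a free module'' points in this direction, but the inductive mechanism you sketch (induction on chain length, tracking Tate vanishing and depth equalities) is never made precise and, as stated, does not close. Your depth-constancy observation is correct and a nice auxiliary fact, but the paper does not need it; the split-summand arguments do the structural work directly.
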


One of the motivations for Theorem \ref{mainthm} comes from a result of Araya and Celikbas, which establishes Theorem \ref{mainthm} for the case where $M$ is maximal Cohen-Macaulay; see \ref{rGdimMCM}.
A nontrivial consequence of Theorem \ref{mainthm} is that, if $R$ is a one-dimensional local ring and $M$ is a nonzero $R$-module such that $\id_R(M)<\infty$ and $\rGdim_R(M)<\infty$, then $R$ is Gorenstein. Furthermore, for the two-dimensional case, it follows immediately from Theorem \ref{mainthm} that:

\begin{cor} Let $R$ be a two-dimensional local ring and let $M$ be a nonzero torsion-free $R$-module (e.g., $M$ is an ideal of $R$). If $\id_R(M)<\infty$ and $\rGdim_R(M)<\infty$, then $R$ is Gorenstein.
\end{cor}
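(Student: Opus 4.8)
The plan is to obtain the corollary as an immediate consequence of Theorem~\ref{mainthm}(ii). Since $d=\dim R=2$, that part of the theorem delivers the Gorenstein conclusion as soon as we know $\rGdim_R(M)<\infty$, $\id_R(M)<\infty$, and $\depth_R(M)\geq d-1=1$; the first two are hypotheses, so the only point to check is the depth bound $\depth_R(M)\geq 1$ for a nonzero torsion-free module $M$.

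To verify this, I would first recall that a local ring admitting a nonzero finitely generated module of finite injective dimension is Cohen--Macaulay (the Bass conjecture, settled in general by Roberts); applied to $M$ this gives $\depth R=\dim R=2$, so in particular $\fm$ contains an $R$-regular element $x$. Since $M$ is torsion-free, every nonzerodivisor of $R$ is a nonzerodivisor on $M$ (this is clear when $M$ is an ideal of $R$), hence $x$ is $M$-regular and therefore $\depth_R(M)\geq 1=d-1$.

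With the inequality $\depth_R(M)\geq d-1$ in hand, Theorem~\ref{mainthm}(ii) applies directly and shows that $R$ is Gorenstein, which completes the argument. I do not expect a genuine obstacle: the corollary is a short deduction, and the single external ingredient is the Cohen--Macaulayness of $R$ forced by the existence of $M$. (If one wished to avoid invoking the Bass conjecture, one could instead observe that $\depth R=0$ would, via the Bass formula $\id_R(M)=\depth R$, make $M$ a nonzero finitely generated injective $R$-module, which is impossible over a local ring of positive dimension; this again yields $\depth R\geq 1$.)
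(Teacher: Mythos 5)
Your argument is correct and is exactly the intended deduction behind the paper's remark that the corollary ``follows immediately from Theorem~\ref{mainthm}.'' The only thing to verify is $\depth_R(M)\geq d-1=1$; you correctly reduce this to finding an $R$-regular element in $\fm$ (which is then $M$-regular because $M$ is torsion-free, i.e., $\Ass_R(M)\subseteq\Ass(R)$), and you correctly rule out $\depth R=0$ --- either via Roberts' theorem (Bass conjecture) giving $R$ Cohen--Macaulay, which the paper already uses implicitly in the proof of Theorem~\ref{mainthm}(ii), or via the more elementary observation that $\depth R=0$ would force $\id_R(M)=0$ by the Bass formula, and a nonzero finitely generated injective module over a local ring exists only when the ring is Artinian, contradicting $\dim R=2$.
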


The proof of Theorem \ref{mainthm} requires preparations; in section 2 we record several preliminary results and give a proof of Theorem \ref{mainthm}. Section 3 is devoted to the proofs of the preliminary results used to establish Theorem \ref{mainthm}.

\section{preliminaries and the proof of Theorem \ref{mainthm}}

In this section we prove our main result, namely Theorem \ref{mainthm}.  Along the way, we record several preliminary results that are used in the proof and an argument for Theorem \ref{Holm}.


\begin{chunk} \label{Gdim} (\textbf{Gorenstein dimension} \cite{Tome1, AuBr}) Let $R$ be a local ring and let $M$ be an $R$-module. Then $M$ is said to be \emph{totally reflexive} provided that $M\cong M^{\ast\ast}$ and $\Ext^i_R(M,R) = 0 = \Ext^i_R(M^{\ast},R)$ for all $i\geq 1$. 

The infimum of $n$ for which there exists an exact sequence $0\to X_n \to \cdots \to X_0 \to M \to 0$ such that each $X_i$ is totally reflexive is called the \emph{Gorenstein dimension} of $M$. If $M$ has Gorenstein dimension $n$, we write $\Gdim_R(M) = n$. Therefore, $M$ is totally reflexive if and only if $\Gdim_R(M)\leq 0$, where it follows by convention that $\Gdim_R(0)=-\infty$.

In the proof of Theorem \ref{mainthm}, we use the fact that the category of modules of finite Gorenstein dimension is closed under taking direct summands; see, for example, \cite[1.1.10(c)]{Gdimbook}.\pushQED{\qed} 
\qedhere
\popQED	
\end{chunk}


Theorem \ref{Holm}(i) is one of the motivations for our work. Therefore, it seems worth recording a short argument that establishes the result; the argument we give here is different from that of Holm \cite{Holm} and  has been explained to us by Arash Sadeghi. First we recall:


\begin{chunk} \label{Isc} (Ischebeck \cite[2.6]{Ischebeck}) Let $R$ be a local ring and let $M$ and $N$ be nonzero $R$-modules.
If $\id_R(N)<\infty$, then it follows that $\depth(R)-\depth_R(M)=\sup\{n\mid \Ext^n_R(M,N)\not=0\}$. \pushQED{\qed} 
\qedhere
\popQED	
\end{chunk}


\begin{chunk} (Holm \cite[2.2]{Holm}) \label{Holm's} Let $R$ be a local ring and let $M$ be an $R$-module. If $\id_R(M)<\infty$, then it follows that $\pd_R(M)=\Gdim_R(M)$.

To see this, note that there is nothing to prove if $\Gdim_R(M)=\infty$. So we assume $\Gdim_R(M)=n<\infty$. Then it follows that $\Gdim_R(\Tr \Omega^n M)=0=\Gdim_R(\Tr \Omega^{n+1} \Tr \Omega^n M)$, where $\Tr$ denotes the Auslander transpose \cite{AuBr}. Next we consider the exact sequence that follows from \cite[2.8]{AuBr}:
\begin{equation}\tag{\ref{Holm's}.1}
0\rightarrow\Ext^1_R(\Tr \Omega^{n+1} \Tr \Omega^n M,M) \rightarrow \Tor_{n+1}^R(\Tr \Omega^n M,M)\rightarrow
\Hom_R(\Ext^{n+1}_R(\Tr \Omega^n M,R),M).
\end{equation}
As $\id_R(M)<\infty$, it follows from \ref{Isc} that $\Ext^1_R(\Tr \Omega^{n+1} \Tr \Omega^n M,M)=0$.
Moreover, we have that $\Ext^{n+1}_R(\Tr \Omega^n M,R)=0$. Thus (\ref{Holm's}.1) yields that $\Tor_1^R(\Tr \Omega^n M,\Omega^nM)=0$.
Consequently, we conclude that $\Omega^n M$ is free, i.e., $\pd_R(M)<\infty$; see \cite[3.9]{Yo}. Hence $\pd_R(M)=\Gdim_R(M)$, as required. \pushQED{\qed} 
\qedhere
\popQED	
\end{chunk}


\begin{chunk} \label{rGdim} (\textbf{Reducing dimensions} \cite[2.1]{CA}) Let $M$ be an $R$-module, and let $\Hdim$ be a \emph{homological invariant} of $R$-modules, for example, $\Hdim=\pd$ or $\Hdim=\Gdim$.

We write $\rHdim(M)<\infty$ provided that there exists a sequence of $R$-modules $K_0,  \ldots, K_r$, positive integers $a_1, \dots, a_r,b_1, \dots, b_r ,n_1, \dots, n_r$, and short exact sequences of the form 
\begin{equation}\tag{\ref{rGdim}.1}
0 \to K_{i-1}^{{\oplus a_i}} \to K_{i} \to \Omega^{n_i}K_{i-1}^{{\oplus b_i}} \to 0
\end{equation}
for each $i=1, \ldots r$, where $K_0=M$ and $\Hdim(K_r)<\infty$. If a sequence of modules as in (\ref{rGdim}.1) exists, then we call $\{K_0,  \ldots, K_r\}$ a \emph{reducing $\Hdim$-sequence} of $M$. 

The \emph{reducing invariant} $\Hdim$ of $M$ is defined as follows:
\begin{equation}\notag{}
\rHdim(M)=\inf\{ r\in \NN \cup \{0\}: \text{there is a reducing $\Hdim$-sequence }  K_0,  \ldots, K_r \text{ of }  M\}.
\end{equation}
We set, $\rHdim(M)=0$ if and only if $\Hdim(M)<\infty$. \pushQED{\qed} 
\qedhere
\popQED	
\end{chunk}


In passing we recall an example which shows that the reducing homological dimensions are finer than regular homological dimensions. This example also shows that the reducing homological dimension of a module is not always bounded by the depth of the ring in question.

\begin{eg} \label{exCA} (\cite[2.3]{CA}) Let $R=k[x,y]/(x,y)^2$. Then $\pd_R(k)=\infty=\Gdim_R(k)$, but we have that $\rGdim_R(k)=1=\rpdim_R(k)$. Also, if $M$ is an $R$-module such that $\rGdim_R(M)\leq \infty=\Gdim_R(M)$, then $M\cong R^{\oplus \alpha} \oplus k^{\oplus \beta}$ for some $\alpha\geq 0$ and $\beta \geq 1$.\pushQED{\qed} 
\qedhere
\popQED	
\end{eg}


The proof of Theorem \ref{mainthm} relies upon the following preliminary results. 

\begin{chunk} (Araya and Celikbas \cite[3.3(iii)]{CA}) \label{rGdimMCM} Let $R$ be a local ring and let $M$ be a nonzero $R$-module such that $\rGdim_R(M)<\infty$. If $\Ext^i_R(M,M)=0$ for all $i\geq 1$, then $\Gdim_R(M)<\infty$. Therefore, if $M$ is maximal Cohen-Macaulay and $\id_R(M)<\infty$, then $R$ is Gorenstein. \pushQED{\qed} 
\qedhere
\popQED	
\end{chunk}

\begin{prop} \label{app-prop} Let $R$ be a local ring and let $M$ be an $R$-module. Assume, whenever $X$ is a totally reflexive $R$-module, one has $\Ext^i_R(X, M)=0$ for all $i\geq 1$. Then it follows that $\rGdim_R(M)=\rpdim_R(M)$.
\end{prop}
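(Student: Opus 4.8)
The plan is to establish the two inequalities $\rGdim_R(M)\le\rpdim_R(M)$ and $\rpdim_R(M)\le\rGdim_R(M)$ separately; only the second requires the hypothesis. The first is unconditional: since finite projective dimension implies finite Gorenstein dimension (free modules are totally reflexive; see \ref{Gdim}), every reducing $\pd$-sequence of $M$ is in particular a reducing $\Gdim$-sequence of $M$, so $\rGdim_R(M)\le\rpdim_R(M)$ by the definition recalled in \ref{rGdim}. For the reverse inequality I may assume $M\neq0$ and $\rGdim_R(M)=r<\infty$, and I fix a reducing $\Gdim$-sequence $K_0=M,K_1,\dots,K_r$ with short exact sequences as in (\ref{rGdim}.1) and $\Gdim_R(K_r)<\infty$. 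The aim is to show that this same sequence is already a reducing $\pd$-sequence, i.e.\ that $\pd_R(K_r)<\infty$; this will force $\rpdim_R(M)\le r$ and complete the proof.

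The crux is a propagation lemma that I would prove first: \emph{if an $R$-module $N$ satisfies $\Ext^i_R(X,N)=0$ for every totally reflexive $X$ and every $i\ge1$, then so does every syzygy $\Omega^nN$.} Here I would invoke two standard facts about totally reflexive modules (see \cite{AuBr,Gdimbook}): the class of such modules is resolving, hence closed under syzygies; and every totally reflexive module is a first syzygy of a totally reflexive module — if $X$ is totally reflexive then so is $X^{\ast}$, and dualizing $0\to\Omega X^{\ast}\to F\to X^{\ast}\to0$ gives, using $X^{\ast\ast}\cong X$ and $\Ext^1_R(X^{\ast},R)=0$, an exact sequence $0\to X\to F^{\ast}\to(\Omega X^{\ast})^{\ast}\to0$ with $(\Omega X^{\ast})^{\ast}$ totally reflexive. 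Iterating the second fact, for every $n$ there is a totally reflexive $X'$ having $X$ as an $n$-th syzygy, and then two dimension shifts — on the left via $\Ext^k_R(\Omega Z,-)\cong\Ext^{k+1}_R(Z,-)$ ($k\ge1$), and on the right via the syzygy sequences of $N$, which shift cleanly because $\Ext^{\ge1}_R(X',R)=0$ — yield, for $i\ge1$,
\[
\Ext^i_R(X,\Omega^nN)\;\cong\;\Ext^{i+n}_R(X',\Omega^nN)\;\cong\;\Ext^i_R(X',N)\;=\;0 .
\]
Feeding this into the long exact sequences obtained by applying $\Ext_R(X,-)$ to (\ref{rGdim}.1), and inducting on $j$, gives $\Ext^i_R(X,K_j)=0$ for every totally reflexive $X$, every $i\ge1$, and every $j=0,\dots,r$; in particular this holds for $K_r$.

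It then remains to run the computation of \ref{Holm's} with $K_r$ in place of $M$, the vanishing just obtained playing the role of Ischebeck's theorem \ref{Isc}. Putting $n=\Gdim_R(K_r)$, the modules $\Tr\Omega^nK_r$ and $\Tr\Omega^{n+1}\Tr\Omega^nK_r$ are totally reflexive, and in the exact sequence from \cite[2.8]{AuBr},
\[
0\to\Ext^1_R(\Tr\Omega^{n+1}\Tr\Omega^nK_r,K_r)\to\Tor^R_{n+1}(\Tr\Omega^nK_r,K_r)\to\Hom_R(\Ext^{n+1}_R(\Tr\Omega^nK_r,R),K_r),
\]
the left-hand term vanishes (it is $\Ext^1_R$ of a totally reflexive module into $K_r$) and so does the right-hand one (since $\Ext^{n+1}_R(\Tr\Omega^nK_r,R)=0$). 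Hence $\Tor^R_{n+1}(\Tr\Omega^nK_r,K_r)=0$, and dimension-shifting gives $\Tor^R_1(\Tr\Omega^nK_r,\Omega^nK_r)=0$, so $\Omega^nK_r$ is free by \cite[3.9]{Yo} and $\pd_R(K_r)<\infty$. Thus $K_0,\dots,K_r$ is a reducing $\pd$-sequence of $M$, whence $\rpdim_R(M)\le r=\rGdim_R(M)$; combined with the first paragraph this yields $\rGdim_R(M)=\rpdim_R(M)$.

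I expect the propagation lemma to be the main obstacle. The naive attempt to carry the vanishing from $N$ to $\Omega^nN$ directly through the syzygy sequences $0\to\Omega^kN\to F\to\Omega^{k-1}N\to0$ breaks down in cohomological degree $1$, and what repairs it is exactly that totally reflexive modules admit totally reflexive ``cosyzygies'' (equivalently, that they are the cycles of totally acyclic complexes). Once that is in place, the remainder is routine bookkeeping with long exact sequences together with the argument already recorded in \ref{Holm's}.
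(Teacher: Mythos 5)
Your proof is correct, and its overall skeleton matches the paper's: establish the unconditional inequality $\rGdim_R(M)\le\rpdim_R(M)$, then, given a reducing $\Gdim$-sequence $K_0=M,\dots,K_r$, propagate the hypothesis $\Ext^{\ge1}_R(X,M)=0$ (for all totally reflexive $X$) to each $K_i$, and finally conclude $\pd_R(K_r)<\infty$. Your ``propagation lemma'' is the same result as the paper's Lemma \ref{app-lem}; you organize the $\Omega^{n_i}$-shift by observing that every totally reflexive module is an $n$-th syzygy of a totally reflexive module and running two dimension shifts, whereas the paper folds this into a single step by noting that the cosyzygy $\Omega^{-n_i}Y$ of a totally reflexive $Y$ is totally reflexive and $\Ext^j_R(Y,\Omega^{n_i}K_{i-1})\cong\Ext^j_R(\Omega^{-n_i}Y,K_{i-1})$ --- these are the same idea in slightly different clothing, as you anticipate at the end of your writeup. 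Where you genuinely diverge is the last step: to get $\pd_R(K_r)<\infty$ from $\Ext^{\ge1}_R(X,K_r)=0$, you rerun the Auslander--Bridger argument of \ref{Holm's} (the exact sequence from \cite[2.8]{AuBr} together with $\Tor_1^R(\Tr\Omega^nK_r,\Omega^nK_r)=0\Rightarrow\Omega^nK_r$ free, via \cite[3.9]{Yo}), with the propagation lemma standing in for Ischebeck. The paper instead takes the finite projective hull $0\to K_r\to P\to X\to 0$ from \cite[1.1]{AuBu} (with $\pd_R(P)<\infty$ and $X$ totally reflexive), observes that $\Ext^1_R(X,K_r)=0$ makes it split, and reads off $\pd_R(K_r)<\infty$ as a direct summand of $P$. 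Both routes are valid; the projective-hull version is a little shorter once the Auslander--Buchweitz approximation is accepted, while yours is more self-contained given that \ref{Holm's} is already in the paper.
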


\begin{prop} \label{p1} Let $R$ be a local ring and let $0\to M^{\oplus a} \to K \to \syz^{n} M^{\oplus b} \to 0$ be a short exact sequence of $R$-modules, where $a\geq 1$,  $b\geq 1$, and $n\geq 0$ are integers. If $\Gdim_R(K)<\infty$, then, for each $i\geq 1$, there exists a short exact sequence of $R$-modules $0\to M^{\oplus a_i} \to Y_i \to \syz^{r_i} M^{\oplus b_i} \to 0$, where $\Gdim_R(Y_i)<\infty$, $r_i=2^i(n+1)-1$, $a_i=a^{2^{i}}$ and $b_i=b^{2^{i}}$.
\end{prop}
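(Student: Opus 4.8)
The plan is to induct on $i$, using the given short exact sequence as the base case $i=0$ (reading $a_0=a$, $b_0=b$, $r_0=n$, $Y_0=K$; note $2^0(n+1)-1=n$). So it suffices to produce, from one short exact sequence $0\to M^{\oplus a}\to Y\to \syz^n M^{\oplus b}\to 0$ with $\Gdim_R(Y)<\infty$, a new one of the form $0\to M^{\oplus a^2}\to Y'\to \syz^{2(n+1)-1}M^{\oplus b^2}\to 0$ with $\Gdim_R(Y')<\infty$; iterating this $i$ times gives $r_i = 2^i(n+1)-1$, $a_i=a^{2^i}$, $b_i=b^{2^i}$ by an immediate check ($2(2^{i-1}(n+1)-1+1)-1 = 2^i(n+1)-1$).

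For the inductive step, I would apply the syzygy functor to the given sequence: from $0\to M^{\oplus a}\to Y\to \syz^n M^{\oplus b}\to 0$ one gets (after adding free summands, which does not affect $\Gdim$-finiteness) a short exact sequence $0\to \syz^{n+1}M^{\oplus b}\to \syz Y \oplus (\text{free})\to \syz(M^{\oplus a})\to 0$, hence after one more syzygy a sequence $0\to \syz^{n+1}M^{\oplus b}\to Z\to M^{\oplus a}\to 0$ up to free summands, where $\syz Y$ (and hence $Z$) has finite Gorenstein dimension since $\Gdim_R(Y)<\infty$ and finite Gorenstein dimension is inherited by syzygies. Now I want to ``multiply'' the original sequence with this shifted one. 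Concretely, I would form the pullback/pushout construction: tensor-like splice the sequence $0\to M^{\oplus a}\to Y\to \syz^n M^{\oplus b}\to 0$ (raised to the $\oplus a$ and $\oplus b$ powers as needed) against $0\to \syz^{n+1}M^{\oplus b}\to Z\to M^{\oplus a}\to 0$ so that the middle $M$-term of one is matched with the $M$-term of the other. More precisely: take the sequence $0\to M^{\oplus a}\to Y\to \syz^n M^{\oplus b}\to 0$, apply $\syz^{n+1}(-)$ conceptually, and pull back along $Z\to M^{\oplus a}$; this builds a module $Y'$ sitting in an extension whose sub is $(\syz^{n+1}M^{\oplus b})^{\oplus a}\cong \syz^{n+1}M^{\oplus ab}$ and whose quotient is $(\syz^n M^{\oplus b})^{\oplus b}$ shifted once more, i.e. $\syz^{n+1}M^{\oplus b^2}$ — and dually on the $M$-side one gets $M^{\oplus a^2}$. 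Chasing the horseshoe, $Y'$ is an extension of finite-$\Gdim$ modules, hence $\Gdim_R(Y')<\infty$ by the long exact sequence characterization of Gorenstein dimension. Assembling the two exponents correctly yields $0\to M^{\oplus a^2}\to Y'\to \syz^{2(n+1)-1}M^{\oplus b^2}\to 0$.

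The bookkeeping I expect to be delicate is getting the exponents $a^2, b^2$ and the shift $2(n+1)-1$ to come out exactly right: the construction must splice \emph{two copies' worth} of the data — roughly, take the direct sum of $a$ copies of one sequence and $b$ copies of the syzygy-shifted sequence and glue along a common $M^{\oplus ab}$ term — so that squaring of the multiplicities is forced and the single extra syzygy shift accumulates as $n\mapsto (n+1)+(n+1)-1$. I would keep the splicing abstract (pullback along an epimorphism, then pushout along a monomorphism, in the style of the $(\ref{rGdim}.1)$-type sequences) rather than writing out matrices.

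The main obstacle is ensuring that the middle term $Y'$ genuinely has finite Gorenstein dimension and that the maps assemble into a short exact sequence of the stated shape; this rests on two standard facts I would cite from \ref{Gdim}: finite Gorenstein dimension passes to syzygies and is closed under extensions (equivalently, under the two-out-of-three property in short exact sequences), together with the stability of finite Gorenstein dimension under adding and removing free summands. Granting those, the argument is a horseshoe-lemma style diagram chase, and the numerics follow by the induction already set up.
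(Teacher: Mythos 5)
Your high-level strategy matches the paper's: induct on $i$ and, in the inductive step, combine two derived short exact sequences via a pushout so the exponents on $M$ get squared and the syzygy index goes to $2(n+1)-1$. But the two sequences you propose to splice are not the right ones. Applying the syzygy functor to $0\to M^{\oplus a}\to Y\to \syz^n M^{\oplus b}\to 0$ gives (up to free summands) $0\to \syz M^{\oplus a}\to \syz Y\oplus(\text{free})\to \syz^{n+1}M^{\oplus b}\to 0$ --- you have the sub and quotient reversed --- and no rotation produces $0\to \syz^{n+1}M^{\oplus b}\to Z\to M^{\oplus a}\to 0$: the rotation as in \ref{R1}(i) yields $0\to \syz^{n+1}M^{\oplus b}\to M^{\oplus a}\oplus F\to K\to 0$, with quotient $K$, not $M^{\oplus a}$. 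The paper never tries to put $M^{\oplus a}$ on the right. It uses that rotation together with the result of applying $\syz^{n+1}$ to the original, $0\to \syz^{n+1}M^{\oplus a}\to G\oplus\syz^{n+1}K\to \syz^{2n+1}M^{\oplus b}\to 0$; taking $a$ copies of the former and $b$ copies of the latter makes both have the common sub $\syz^{n+1}M^{\oplus ab}$, and the pushout along the two inclusions of $\syz^{n+1}M^{\oplus ab}$ yields (as the middle column of the pushout square) $0\to M^{\oplus a^2}\oplus F^{\oplus a}\to X\to \syz^{2n+1}M^{\oplus b^2}\to 0$, with $\Gdim_R(X)<\infty$ because $X$ is an extension of $K^{\oplus a}$ by $G^{\oplus b}\oplus\syz^{n+1}K^{\oplus b}$. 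So the gluing locus must be $\syz^{n+1}M^{\oplus ab}$, not $M^{\oplus ab}$.

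The second and more serious issue is that ``adding free summands, which does not affect $\Gdim$-finiteness'' conceals a step you must actually perform. The pushout leaves you with sub $M^{\oplus a^2}\oplus F^{\oplus a}$, while the proposition requires the sub to be exactly $M^{\oplus a^2}$, and stripping a free summand off the left term of a short exact sequence while keeping the middle of finite Gorenstein dimension is not formal. The paper invokes Takahashi's lemma \ref{Tak} (\cite[3.1]{Taksyz}) precisely to replace $0\to M^{\oplus a^2}\oplus F^{\oplus a}\to X\to \syz^{2n+1}M^{\oplus b^2}\to 0$ with $0\to M^{\oplus a^2}\to A\to \syz^{2n+1}M^{\oplus b^2}\to 0$, $\Gdim_R(A)<\infty$. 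This is not cosmetic: in the application in Theorem \ref{mainthm}(ii) one needs $\id_R$ of the sub to be finite, and $\id_R(M^{\oplus a^2}\oplus F^{\oplus a})$ need not be finite since $\id_R(R)$ may be infinite. Your bookkeeping (the recursion $r\mapsto 2r+1$, $a\mapsto a^2$, $b\mapsto b^2$) and the appeals to closure of finite Gorenstein dimension under syzygies and extensions are all fine.
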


\begin{prop} \label{mainprop} Let $R$ be a local ring and let $M$ be an $R$-module. If $x \in \m$ is a non zero-divisor on $R$ and $M$, then it follows that $\rGdim_{R/xR}(M/xM)\leq \rGdim_R(M)$.
\end{prop}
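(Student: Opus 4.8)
The inequality is vacuous when $\rGdim_R(M)=\infty$, so assume $r:=\rGdim_R(M)<\infty$ and fix a reducing $\Gdim$-sequence $K_0=M,K_1,\dots,K_r$ of $M$ over $R$, together with short exact sequences
\begin{equation}\tag{$\star$}
0\to K_{i-1}^{\oplus a_i}\to K_i\to \syz^{n_i}K_{i-1}^{\oplus b_i}\to 0\qquad(1\le i\le r)
\end{equation}
and with $\Gdim_R(K_r)<\infty$. Write $\bar R=R/xR$ and $\bar N=N/xN$ for an $R$-module $N$. The plan is to apply $-\otimes_R\bar R$ to the entire sequence $(\star)$ and to verify that the outcome $\bar K_0=\bar M,\bar K_1,\dots,\bar K_r$ is a reducing $\Gdim$-sequence of $\bar M$ over $\bar R$; since this sequence again has length $r$, it yields $\rGdim_{\bar R}(\bar M)\le r=\rGdim_R(M)$.

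The crucial point — and the one that makes all the reductions behave well — is that $x$ remains a non zero-divisor on every module occurring in $(\star)$, which I would establish by induction on $i$. For $i=0$ this is the hypothesis on $M$. Since $x$ is a non zero-divisor on $R$, it is a non zero-divisor on every free module, hence on every submodule of a free module; in particular it is a non zero-divisor on the (minimal) first syzygy of any module, so, iterating and invoking the inductive hypothesis when $n_i=0$, on $\syz^{n_i}K_{i-1}^{\oplus b_i}$ as well as on $K_{i-1}^{\oplus a_i}$. An elementary diagram chase on $(\star)$ then transfers this to the middle term: if $u\in K_i$ satisfies $xu=0$, then the image of $u$ in $\syz^{n_i}K_{i-1}^{\oplus b_i}$ is annihilated by $x$, hence zero, so $u$ lifts to $K_{i-1}^{\oplus a_i}$, on which $x$ is regular, forcing $u=0$.

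With this in hand, for each $i$ one has $\Tor_1^R(\bar R,\syz^{n_i}K_{i-1}^{\oplus b_i})=0$ because $x$ is a non zero-divisor on $\syz^{n_i}K_{i-1}^{\oplus b_i}$, so applying $-\otimes_R\bar R$ to $(\star)$ produces short exact sequences of $\bar R$-modules
\begin{equation}\tag{$\star\star$}
0\to \bar K_{i-1}^{\oplus a_i}\to \bar K_i\to \overline{\syz^{n_i}K_{i-1}}^{\oplus b_i}\to 0\qquad(1\le i\le r).
\end{equation}
To read $(\star\star)$ as a defining sequence of a reducing $\Gdim$-sequence over $\bar R$, I would identify $\overline{\syz^{n}_RK_{i-1}}$ with $\syz^{n}_{\bar R}(\bar K_{i-1})$: because $x$ is a non zero-divisor on $K_{i-1}$ and $\pd_R(\bar R)\le 1$, we have $\Tor_j^R(\bar R,K_{i-1})=0$ for all $j\ge1$, so reducing a minimal free resolution of $K_{i-1}$ modulo $x$ yields a free resolution of $\bar K_{i-1}$ over $\bar R$ which is again minimal (the differentials have entries in $\fm$, hence in $\fm/xR$), and comparing images of the differentials gives the asserted isomorphism. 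Finally, $x$ being a non zero-divisor on $K_r$ with $\Gdim_R(K_r)<\infty$, the standard behavior of Gorenstein dimension under reduction by a regular element gives $\Gdim_{\bar R}(\bar K_r)<\infty$: one reduces a finite resolution of $K_r$ by totally reflexive modules modulo $x$ (these modules are torsion-free, so $x$ stays regular on them and on their syzygies) and checks, via the base-change isomorphisms $\Ext^i_{\bar R}(\bar X,\bar R)\cong\Ext^i_R(X,\bar R)$ and $\Hom_{\bar R}(\bar X,\bar R)\cong\overline{\Hom_R(X,R)}$ (valid since $\Tor_j^R(\bar R,X)=0$ for $j\ge1$ when $x$ is regular on $X$), that a totally reflexive $R$-module reduces to a totally reflexive $\bar R$-module. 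Consequently $\{\bar K_0,\dots,\bar K_r\}$ is a reducing $\Gdim$-sequence of $\bar M$, and the proof is complete.

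I expect the main obstacle to be not any isolated deep fact but precisely the bookkeeping in the second paragraph: once one knows that $x$ remains a non zero-divisor on every $K_i$ (and hence on every syzygy appearing in $(\star)$), all the reductions reduce to the classical one-element statements $\syz_{\bar R}\bar N\cong\overline{\syz_R N}$ and $\Gdim_{\bar R}\bar N=\Gdim_R N$ for $x$ regular on $N$; without that inductive control, neither the syzygy identification needed for $(\star\star)$ nor the finiteness of $\Gdim_{\bar R}(\bar K_r)$ is available.
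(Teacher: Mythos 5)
Your argument is correct and is essentially the paper's own proof: you establish by induction that $x$ stays a non zero-divisor on each $K_i$ (the paper's Lemma~\ref{rmk1}), identify $\overline{\syz^{n}_{R}K_{i-1}}$ with $\syz^{n}_{R/xR}(\overline{K_{i-1}})$ (the paper's Lemma~\ref{l1}(ii)), pass the short exact sequences to $R/xR$, and invoke the stability of Gorenstein dimension under reduction by a regular element. The only difference is cosmetic: you verify the inductive regularity claim by a direct diagram chase and sketch the $\Gdim$ base-change argument, where the paper uses the $\Tor$ long exact sequence and simply cites \cite[1.4.5]{Gdimbook}.
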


Now we make use of Propositions \ref{app-prop}, \ref{p1} and \ref{mainprop}, and prove Theorem \ref{mainthm}; we defer the proofs of these propositions to Section 3. 

\begin{proof}[Proof of Theorem \ref{mainthm}] Note that, since $\id_R(M)<\infty$, it follows that $\Ext^j_R(X, M)=0$ for all $j\geq 1$ for each totally reflexive $R$-module $X$; see \ref{Isc}.
Hence part (i) follows from Proposition \ref{app-prop}.

Next we assume $\rGdim_R(M)\leq 1$, and show that $R$ is Gorenstein. It follows from \ref{rGdim} that there exists a short exact sequence of $R$-modules $0\to M^{\oplus a} \to K \to \syz^{n} M^{\oplus b} \to 0$, where $a$, $b$, $n$ are positive integers and $\Gdim_R(K)<\infty$. Then, by Proposition \ref{p1}, we have a short exact sequence of $R$-modules $0\to M^{\oplus a_i} \to Y \to \syz^{r_i} M^{\oplus b_i} \to 0$, where $i\gg 0$, $\Gdim_R(Y)<\infty$, and $r_i\geq d$. Therefore, $\syz^{r_i} M^{\oplus b_i}$ is maximal Cohen-Macaulay. Now, as $\id_R(M^{\oplus a_i})<\infty$, we see that $\Ext^1_R(\syz^{r_i} M^{\oplus b_i} ,M^{\oplus a_i})=0$; see \ref{Isc}. So the short exact sequence $0\to M^{\oplus a_i} \to Y \to \syz^{r_i} M^{\oplus b_i} \to 0$ splits, and hence $M^{\oplus a_i}$ occurs as a direct summand of 
$Y$. This shows that $\Gdim_R(M)<\infty$ and hence $R$ is Gorenstein; \ref{Gdim} and Theorem \ref{Holm}.

Now, to complete the proof of part (ii), we assume $\depth_R(M)\geq d-1$, and proceed by induction on $d$ to show that $R$ is Gorenstein. There is nothing to prove if $d=0$; see \ref{rGdimMCM}. 

Assume $d\geq 2$ and that the claim is true when $d=1$. Since $R$ is a Cohen-Macaulay ring and $\depth_R(M)\geq 1$, there exists an element $x\in \fm$ which is a non zero-divisor on both $R$ and $M$. Then it follows by Proposition \ref{mainprop} that $\rGdim_{R/xR}(M/xM)\leq \rGdim_R(M)<\infty$. Hence, as $\id_{R/xR}(M/xM)<\infty$ and $\depth_{R/xR}(M/xM)\geq d-2$, we conclude by the induction hypothesis that $R/xR$ is Gorenstein, i.e., $R$ is Gorenstein. Therefore, it suffices to prove the case where $d=1$.
 
Assume $d=1$. Then the case where $\depth_R(M)=1$ follows from \ref{rGdimMCM}. Therefore, we assume $\depth_R(M)=0$ and choose a reducing $\Gdim$-sequence $\{K_0, \ldots, K_r\}$ of $M$. 

Claim: For each $i=0, \ldots, r$, we have that $K_i \cong M^{\oplus c_i} \oplus L_i$ for some $R$-module $L_i$ and for some integer $c_i\geq 1$ such that $L_i$ is either zero or maximal Cohen-Macaulay. 

Proof of the claim: We proceed by the induction on $i$. If $i=0$, then, since $K_0=M$, we pick $L_0=0$ and $c_0=1$. So we assume $i\geq 1$. Then, by the  induction hypothesis, we have that $K_{i-1}  \cong M^{\oplus c_{i-1}} \oplus L_{i-1}$ for some $R$-module $L_{i-1}$ and for some integer $c_{i-1}\geq 1$, where $L_{i-1}$ is either zero or maximal Cohen-Macaulay. Now we consider the following pushout diagram, where the middle horizontal short exact sequence follows by the definition of reducing Gorenstein dimension; see \ref{rGdim}.  
\[
\xymatrix{
& 0 \ar[d] & 0 \ar[d] & & \\
& M^{\oplus c_{i-1}a_i} \ar[d] \ar@{=}[r] & M^{\oplus c_{i-1}a_i} \ar[d]  & & \\
0 \ar[r] & K_{i-1}^{\oplus a_i} \ar[d] \ar[r] & K_i \ar[d] \ar[r] & \syz^{n_i}K_{i-1}^{\oplus b_i } \ar@{=}[d] \ar[r] & 0 \\
0 \ar[r] & L_{i-1}^{\oplus a_i} \ar[d] \ar[r] & L_i \ar[d] \ar[r] & \syz^{n_i} K_{i-1}^{\oplus b_i } \ar[r] & 0 \\
& 0 & 0 & &}
\]
Since $L_{i-1}$ is either zero or maximal Cohen-Macaulay, we see from the bottom horizontal short exact sequence that $L_i$ is either zero or maximal Cohen-Macaulay.
In either case, since $\id_R(M)<\infty$, it follows by \ref{Isc} that $\Ext^1_R(L_i, M)=0$. This implies that the middle vertical short exact sequence splits, yields the isomorphism $K_i \cong L_i \oplus M^{\oplus c_{i-1}a_i}$, and proves the claim.

Now, by the claim established above, $M$ is a direct summand of $K_r$. Then, since $\Gdim_R(K_r)<\infty$, we conclude that $\Gdim_R(M)<\infty$; see \ref{Gdim}. Therefore, Theorem \ref{Holm} shows that $R$ is Gorenstein, and this completes the proof of the theorem.
\end{proof}


\section{Proofs of the preliminary propositions}

This section is devoted to the proofs of Propositions \ref{app-prop}, \ref{p1} and \ref{mainprop}. We start by preparing a lemma:


\begin{lem} \label{app-lem} Let $R$ be a local ring and let $M$ be an $R$-module. Assume, for each totally reflexive $R$-module $X$, it follows that $\Ext^j_R(X, M)=0$ for all $j\geq 1$. Assume further there are short exact sequences of $R$-modules of the form $0 \to K_{i-1}^{{\oplus a_i}} \to K_{i} \to \Omega^{n_i}K_{i-1}^{{\oplus b_i}} \to 0$, where $r$ is a positive integer, $K_0=M$, and $a_1, \dots, a_r,b_1, \dots, b_r ,n_1, \dots, n_r$ are positive integers for each $i=1, \ldots r$. Then, for each totally reflexive $R$-module $X$, it follows that $\Ext^j_R(X, K_i)=0$ for all $j\geq 1$ and for all $i=0, \ldots, r$.
\end{lem}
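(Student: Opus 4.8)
The plan is to induct on $i$, with the real work concentrated in an auxiliary claim asserting that the vanishing condition ``$\Ext^j_R(X,N)=0$ for all $j\geq 1$ and all totally reflexive $X$'' is inherited by the syzygies of $N$. Concretely, I would first prove: if $N$ is an $R$-module with $\Ext^j_R(X,N)=0$ for every $j\geq 1$ and every totally reflexive $R$-module $X$, then $\Ext^j_R(X,\syz^{n}N)=0$ for every $j\geq 1$, every $n\geq 0$, and every totally reflexive $X$. By induction on $n$ it suffices to treat $n=1$. Fix a totally reflexive module $X$ and recall that a totally reflexive module admits a totally reflexive cosyzygy, i.e.\ there is a short exact sequence $0\to X\to F\to X'\to 0$ with $F$ free and $X'$ totally reflexive (see \cite{AuBr}). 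Applying $\Hom_R(-,\syz N)$ to this sequence, and using that $\Ext^i_R(F,-)$ vanishes for $i\geq 1$, yields $\Ext^j_R(X,\syz N)\cong\Ext^{j+1}_R(X',\syz N)$ for all $j\geq 1$. Applying $\Hom_R(X',-)$ to a short exact sequence $0\to\syz N\to P\to N\to 0$ with $P$ free, and using that $\Ext^i_R(X',R)=0$ for $i\geq 1$ (so $\Ext^i_R(X',P)=0$ as well, $P$ being free) because $X'$ is totally reflexive, yields $\Ext^{j+1}_R(X',\syz N)\cong\Ext^j_R(X',N)$ for all $j\geq 1$. Since $X'$ is totally reflexive, the hypothesis on $N$ forces $\Ext^j_R(X',N)=0$ for $j\geq 1$, and composing the two isomorphisms proves the claim.

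With the claim available, I would prove the lemma by induction on $i=0,1,\dots,r$. The case $i=0$ is exactly the standing hypothesis applied to $K_0=M$. Suppose $i\geq 1$ and that $\Ext^j_R(X,K_{i-1})=0$ for all $j\geq 1$ and all totally reflexive $X$. The claim, applied with $N=K_{i-1}$, then gives $\Ext^j_R(X,\syz^{n_i}K_{i-1})=0$ for all $j\geq 1$ and all totally reflexive $X$; since $\Ext^j_R(X,-)$ commutes with the finite direct sums involved, also $\Ext^j_R(X,K_{i-1}^{\oplus a_i})=0=\Ext^j_R(X,\syz^{n_i}K_{i-1}^{\oplus b_i})$ for all $j\geq 1$. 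Applying $\Hom_R(X,-)$ to the short exact sequence $0\to K_{i-1}^{\oplus a_i}\to K_i\to\syz^{n_i}K_{i-1}^{\oplus b_i}\to 0$ then places $\Ext^j_R(X,K_i)$ between two vanishing terms for every $j\geq 1$, whence $\Ext^j_R(X,K_i)=0$. This completes the induction and the proof.

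Apart from the claim, the argument is routine manipulation of long exact sequences, and I expect the claim to be the main obstacle. Its subtlety is that a naive dimension shift in the second variable only controls $\Ext^j_R(X,\syz^nN)$ once $j>n$; to reach the low degrees one must first \emph{raise} the cohomological degree by replacing $X$ with a totally reflexive cosyzygy and only then \emph{lower} it by sliding $\syz N$ past a free module --- and both steps are legitimate precisely because everything in sight is totally reflexive, so that the $\Ext$ groups against free modules vanish. This twofold use of total reflexivity is the heart of the matter.
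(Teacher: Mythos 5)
Your proof is correct and follows essentially the same route as the paper: both induct on $i$, and the core step is the dimension shift $\Ext^j_R(X,\Omega^{n}K_{i-1})\cong\Ext^j_R(\Omega^{-n}X,K_{i-1})$ obtained by passing to a totally reflexive cosyzygy of $X$. You merely unpack this isomorphism into an explicit two-step ``raise via cosyzygy of $X$, then lower via syzygy of $N$'' argument and package it as a standalone claim, which the paper asserts in one line.
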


\begin{proof} We proceed by induction on $i$. 

If $i=0$, then $K_0=M$, and so there is nothing to prove. Let $i\geq 1$ be an integer and assume, for each totally reflexive $R$-module $X$, we have that $\Ext^j_R(X, K_{i-1})=0$ for all $j\geq 1$.  

Next consider the following short exact sequence that exists by our assumption:
\begin{equation}\tag{\ref{app-lem}.1}
0 \to K_{i-1}^{{\oplus a_i}} \to K_{i} \to \Omega^{n_i}K_{i-1}^{{\oplus b_i}} \to 0.
\end{equation}
Let $Y$ be a totally reflexive $R$-module. Then (\ref{app-lem}.1) yields the exact sequence for each $j\geq 1$:
\begin{equation}\tag{\ref{app-lem}.2}
\Ext_R^{j}(Y, K_{i-1}^{{\oplus a_i}}) \to \Ext_R^{j}(Y, K_{i}) \to \Ext_R^{j}(Y, \Omega^{n_i}K_{i-1}^{{\oplus b_i}}). 
\end{equation}
As $\Ext_R^{j}(Y, K_{i-1}^{{\oplus a_i}})$ vanishes for each $j\geq 1$ by the induction hypothesis, to complete the induction argument, it suffices to observe the vanishing of $\Ext_R^{j}(Y, \Omega^{n_i}K_{i-1})$.
This follows due to the facts that $\Omega^{-n_i}Y$ is a totally reflexive $R$-module and $\Ext_R^{j}(Y, \Omega^{n_i}K_{i-1}) \cong \Ext_R^{j}(\Omega^{-n_i}Y, K_{i-1})$. 
\end{proof}

\begin{proof}[Proof of Proposition \ref{app-prop}:] \label{pp1} Note that, if $\rGdim_R(M)=\infty$, then it follows that $\rpdim_R(M)=\infty$. Hence, to prove the proposition, it suffices to assume $\rGdim_R(M)<\infty$.

Assume $\rGdim_R(M)=r<\infty$ and let $\{K_0, \ldots, K_r \}$ be a reducing $\Gdim$ sequence of $M$. Then, since $\Gdim_R(K_r)<\infty$, we consider the finite projective hull of $K_r$ \cite[1.1]{AuBu}, i.e., a short exact sequence of $R$-modules of the form $0 \to K_r \to P \to X \to 0$, where $\pd_R(P)<\infty$ and $X$ is totally reflexive. Note that Lemma \ref{app-lem} implies that $\Ext^1_R(X, K_r)=0$. Therefore, the finite projective hull of $K_r$ splits and hence $\pd_R(K_r)<\infty$. This shows that $\rpdim_R(M)\leq r$. As, in general, $\rGdim_R(M)\leq \rpdim_R(M)$, the claim of the proposition follows.
\end{proof}

\begin{rmk} It is worth noting that there are examples of local rings $R$ and modules $M$ over $R$ with $\rGdim_R(M)<\infty=\pd_R(M)$ and $\Ext^i_R(X, M)=0$ for each totally reflexive $R$-module $X$ and each $i\geq 1$. For example, if $R$ is as in Example \ref{exCA} and $M=k$, then each totally reflexive $R$-module is free so that $\Ext^i_R(X, M)=0$ for all $i\geq 1$ and $\pd_R(M)=\infty$. 
\end{rmk}
\vspace{-0.1in}

The next two results are used for the proof of Proposition \ref{p1}; the first one, \ref{R1}, is well-known, but we include it for completeness. The second one, \ref{Tak}, is a special case of \cite[3.1]{Taksyz} and plays an important role for the proof of Proposition \ref{p1}.

\begin{chunk} \label{R1} (\cite[2.2]{DaoTak}) Let $R$ be a local ring and let $0\to A\to B\to C \to 0$ be a short exact sequence of $R$-modules. Then there are exact sequence of $R$-modules, where $n\geq 0$ is any integer, and $F$ and $G$ are free $R$-modules.
\begin{enumerate}[\rm(i)]
\item $0 \to \syz C \to A\oplus F \to B \to 0$.
\item $0 \to \syz^n A \to G \oplus \syz^n B \to \syz^n C \to 0$.
\end{enumerate}
\end{chunk}

\begin{chunk} \label{Tak} Let $R$ be a commutative ring. If $0 \to L \to X \to N \to 0$ is a short exact sequence of $R$-modules such that $\Gdim_R(X)<\infty$ and $L\cong G \oplus Y$ for some free $R$-module $G$. Then there exists a short exact sequence of $R$-modules $0 \to Y \to A \to N \to 0$, where $\Gdim_R(A)<\infty$; see \cite[3.1]{Taksyz}.
\end{chunk}
\vspace{-0.1in}

Next is the proof of the second proposition:

\begin{proof} [Proof of Proposition \ref{p1}] Note that, in view of the short exact sequence $0\to M^{\oplus a} \to K \to \syz^{n} M^{\oplus b} \to 0$, it follows from Remark \ref{R1} that there are exact sequences
\begin{align}\tag{\ref{p1}.1}
0 \to \syz^{n+1}M^{\oplus b} \to M^{\oplus a}\oplus F \to K \to 0 
\end{align}
and
\begin{align} \tag{\ref{p1}.2}
0 \to \syz^{n+1}M^{\oplus a} \to G \oplus \syz^{n+1}K \to \syz^{2n+1}M^{\oplus b} \to 0,
\end{align}
where $F$ and $G$ are free $R$-modules.

By taking the direct sum of $a$ copies of the short exact sequence in (\ref{p1}.1) and the direct sum of $b$ copies of the short exact sequence in (\ref{p1}.2), we obtain the following short exact sequences:
\begin{align}\tag{\ref{p1}.3}
0 \to \syz^{n+1}M^{\oplus ab} \stackrel{\alpha}
     {\longrightarrow} M^{\oplus a^2}\oplus F^{\oplus a} \to K^{\oplus a} \to 0 
\end{align}
\begin{align} \tag{\ref{p1}.4}
0 \to \syz^{n+1}M^{\oplus ab} \stackrel{\beta}
     {\longrightarrow}  G^{\oplus b} \oplus \syz^{n+1}K^{\oplus b} \to \syz^{2n+1}M^{\oplus b^2} \to 0
\end{align}

Now we take the pushout of the maps $\alpha$ and $\beta$ from the exact sequences in (\ref{p1}.3) and (\ref{p1}.4), and obtain the following diagram with with exact rows and columns:
\[
\xymatrix{
& 0 \ar[d] & 0 \ar[d] & & \\
0 \ar[r] & \syz^{n+1}M^{\oplus ab} \ar[r]^{\alpha} \ar[d]^{\beta} & M^{\oplus a^{2}}\oplus F^{\oplus a} \ar[r] \ar[d] & K^{\oplus a} \ar[r] \ar@{=}[d] & 0\\
0 \ar[r] & G^{\oplus b}  \oplus \syz^{n+1}K^{\oplus b} \ar[r] \ar[d] & X \ar[r] \ar[d] & K^{\oplus a} \ar[r] & 0\\
& \syz^{2n+1}M^{\oplus b^2} \ar[d] \ar@{=}[r] & \syz^{2n+1}M^{\oplus b^2} \ar[d] & &\\
& 0 & 0 & &
}
\]
Now assume $\Gdim_R(K)<\infty$. Then the exact sequence in the middle row in the above diagram implies that $\Gdim_R(X)<\infty$. So we use \ref{Tak} for the exact sequence $0 \to M^{\oplus a^2}\oplus F^{\oplus a} \to X \to \syz^{2n+1} M^{\oplus b^2} \to 0$, and obtain a short exact sequence of the form
\begin{align} \tag{\ref{p1}.5}
0 \to M^{\oplus a^{2}} \to A \to \syz^{2n+1} M^{\oplus b^{2}} \to 0,
\end{align}
where $\Gdim_R(A)<\infty$. Therefore, setting $Y_1=A$, we establish the claim for the case where $i=1$.

Next assume $i\geq 2$. Then, by the induction hypothesis, there exists a short exact sequence of $R$-modules of the form
\begin{align} \tag{\ref{p1}.6}
0\to M^{\oplus a_{i-1}} \to Y_{i-1} \to \syz^{r_{i-1}} M^{\oplus b_{i-1}} \to 0,
\end{align}
where $\Gdim_R(Y_{i-1})<\infty$, $r_{i-1}=2^{i-1}(n+1)-1$, $a_{i-1}=a^{2^{i-1}}$ and $b_{i-1}=b^{2^{i-1}}$. Hence we can apply the previous process to the short exact sequence in (\ref{p1}.6) and obtain a short exact sequence of $R$-modules $0\to M^{\oplus a_i} \to Y_i \to \syz^{r_i} M^{\oplus b_i} \to 0$, where $\Gdim_R(Y_i)<\infty$, $r_i=2^i(n+1)-1$, $a_i=a^{2^{i}}$ and $b_i=b^{2^{i}}$, as required.
\end{proof}


To establish Proposition \ref{mainprop}, we prepare two lemmas:

\begin{lem} \label{l1} Let $R$ be a local ring, $M$ an $R$-module, and let $x \in \m$ be an element of $R$.
\begin{enumerate}[\rm(i)]
\item $x$ is a non zero-divisor on $R$, then $x$ is a non zero-divisor on $\Omega_R^i(M)$ for each $i \geq 1$.
\item If $x$ is a non zero-divisor on $R$ and $M$, then $\Omega_R^i(M)/x\Omega_R^i(M) \iso \Omega_{R/xR}^i(M/xM)$ for each $i\geq 0$.
\end{enumerate}
\end{lem}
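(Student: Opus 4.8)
The plan is to work relative to a fixed free resolution $\cdots\to F_2\xrightarrow{d_2}F_1\xrightarrow{d_1}F_0\xrightarrow{d_0}M\to 0$, so that for each $i\geq 1$ the syzygy $\Omega_R^i(M)=\im(d_i)=\ker(d_{i-1})$ is literally a submodule of the free module $F_{i-1}$; since $x\in\fm$, reducing a minimal free resolution modulo $x$ again produces a minimal resolution, so the isomorphism obtained below transfers to the usual minimal syzygies. With this setup, part~(i) is immediate: a non zero-divisor on $R$ is a non zero-divisor on every free $R$-module, in particular on $F_{i-1}$, hence on the submodule $\Omega_R^i(M)$.

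For part~(ii), the first step is to compute Tor against $R/xR$. Since $x$ is a non zero-divisor on $R$, the complex $0\to R\xrightarrow{x}R\to R/xR\to 0$ is a free resolution of $R/xR$, and tensoring it with an $R$-module $N$ gives $\Tor_1^R(N,R/xR)\cong(0:_N x)$ and $\Tor_j^R(N,R/xR)=0$ for $j\geq 2$. Applying this with $N=M$ (using that $x$ is a non zero-divisor on $M$) and with $N=\Omega_R^i(M)$ for $i\geq 1$ (using part~(i)), we obtain $\Tor_j^R(M,R/xR)=0$ for all $j\geq 1$. Equivalently, $F_\bullet\otimes_R R/xR$ is a free resolution of $M/xM$ over $R/xR$, so for each $i\geq 1$ we have $\Omega_{R/xR}^i(M/xM)=\im(\bar d_i)\subseteq F_{i-1}/xF_{i-1}$, where $\bar d_i:=d_i\otimes_R R/xR$.

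The second step is to identify this image. Factor $d_i$ as $F_i\xrightarrow{d_i'}\Omega_R^i(M)\xrightarrow{\iota}F_{i-1}$, where $d_i'$ is the surjection induced by $d_i$ and $\iota$ is the inclusion. Tensoring with $R/xR$ keeps $\bar d_i':=d_i'\otimes_R R/xR$ surjective; and applying $-\otimes_R R/xR$ to the short exact sequence $0\to\Omega_R^i(M)\xrightarrow{\iota}F_{i-1}\to\Omega_R^{i-1}(M)\to 0$ (read as $0\to\Omega_R^1(M)\to F_0\to M\to 0$ when $i=1$) shows, using the vanishing $\Tor_1^R(\Omega_R^{i-1}(M),R/xR)=0$ from the first step, that $\bar\iota:=\iota\otimes_R R/xR$ is injective. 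Hence $\Omega_{R/xR}^i(M/xM)=\im(\bar d_i)=\bar\iota\bigl(\im(\bar d_i')\bigr)=\bar\iota\bigl(\Omega_R^i(M)/x\Omega_R^i(M)\bigr)\cong\Omega_R^i(M)/x\Omega_R^i(M)$, which is the desired isomorphism; the case $i=0$ is the tautology $M/xM=M/xM$.

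I expect the only real difficulty to be bookkeeping rather than anything conceptual: one must fix a single resolution at the outset so that the syzygies are honest modules (and not merely defined up to free summands), track the factorization $d_i=\iota\circ d_i'$ faithfully through the tensor product, and handle the edge cases $i=0$ and $i=1$ separately from the generic step.
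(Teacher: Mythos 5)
Your argument is correct and amounts to the same idea as the paper's, just packaged globally rather than inductively: the paper proves part (ii) by induction on $i$, tensoring the single short exact sequence $0\to\Omega^{i+1}M\to R^{\oplus\beta_i}\to\Omega^iM\to 0$ with $R/xR$ and using that $x$ is a non zero-divisor on $\Omega^iM$ to preserve exactness, exactly the fact you record as $\Tor_1^R(\Omega^{i-1}M,R/xR)=0$ when proving $\bar\iota$ is injective. Your global formulation via the reduced resolution $\bar F_\bullet$ is a clean repackaging of that same induction, and the remark about minimality descending modulo $x\in\fm$ correctly takes care of the well-definedness of syzygies.
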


\begin{proof} The first part is clear so we proceed by induction on $i$ to prove the second part.

If $i=0$, then $\Omega_R^0(M)/x\Omega_R^0(M)=M/xM=\Omega_{R/xR}^0(M/xM)$, so the claim follows. Next, given $i\geq 0$, we consider the following short exact sequence:
\begin{equation}\tag{\ref{l1}.1}
0 \rightarrow \Omega^{i+1} M \rightarrow R^{\oplus \beta_i} \rightarrow \Omega^iM \rightarrow 0
\end{equation}
Then, since $x$ is a non zero-divisor on $\Omega^iM$, we obtain, by tensoring (\ref{l1}.1) by $R/xR$, another short exact sequence in $\md(R/xR)$:
\begin{equation}\tag{\ref{l1}.2}
0 \rightarrow \Omega^{i+1}(M)/x\Omega^{i+1}M  \rightarrow R^{\oplus \beta_i}/xR^{\oplus \beta_i} \rightarrow \Omega^i(M)/x\Omega^i(M) \rightarrow 0.
\end{equation}
This implies that $\Omega^{i+1}(M)/x\Omega^{i+1}M \cong \Omega_{R/xR}\big(\Omega^i(M)/x\Omega^i(M)\big)$. As the induction hypothesis yields the isomorphism $\Omega_R^i(M)/x\Omega_R^i(M) \iso \Omega_{R/xR}^i(M/xM)$, we conclude that: 
\begin{equation}\notag{}
\Omega^{i+1}(M)/x\Omega^{i+1}M \cong \Omega_{R/xR}\big(\Omega_{R/xR}^i(M/xM)\big) \cong \Omega^{i+1}_{R/xR}\big(M/xM)\big). 
\end{equation}
This establishes the claim.
\end{proof}

\begin{lem} \label{rmk1} Let $R$ be a local ring and let  $0 \to K_{i-1}^{\oplus a_i} \to K_i \to \syz^{n_i}_R K_{i-1}^{\oplus b_i} \to 0$ be short exact sequences of $R$-modules for $i=1, \ldots r$, where $a_i, b_i, n_i$ are nonnegative integers and $r$ is a positive integer. If $x\in \fm$ is a non zero-divisor on $K_0$, then $x$ is a non zero-divisor on $K_i$ for each $i=0, \ldots, r$.
\end{lem}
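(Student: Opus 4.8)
The plan is to argue by induction on $i$, with the base case $i=0$ being exactly the hypothesis that $x$ is a non zero-divisor on $K_0$. For the inductive step I would assume that $x$ is a non zero-divisor on $K_{i-1}$ and deduce that it is a non zero-divisor on $K_i$, using the given short exact sequence $0 \to K_{i-1}^{\oplus a_i} \to K_i \to \syz^{n_i}_R K_{i-1}^{\oplus b_i} \to 0$.

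The first step is to observe that $x$ is a non zero-divisor on both of the outer terms of this sequence. For the submodule $K_{i-1}^{\oplus a_i}$ this is immediate from the inductive hypothesis, since an element that is a non zero-divisor on a module is automatically a non zero-divisor on every finite direct sum of copies of that module. For the quotient $\syz^{n_i}_R K_{i-1}^{\oplus b_i}$ I would split into two cases according to $n_i$: if $n_i \geq 1$, then $x$ is a non zero-divisor on $\syz^{n_i}_R K_{i-1}$ by Lemma \ref{l1}(i) (applied to $K_{i-1}$), and hence on the direct sum of $b_i$ copies of it; if $n_i = 0$, then $\syz^{0}_R K_{i-1} = K_{i-1}$ and the conclusion is again just the inductive hypothesis.

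With these two facts in hand, the proof is finished by a direct diagram chase: if $y \in K_i$ satisfies $xy = 0$, then its image in $\syz^{n_i}_R K_{i-1}^{\oplus b_i}$ is annihilated by $x$, hence is zero, so $y$ lies in the submodule $K_{i-1}^{\oplus a_i}$; but $x$ is a non zero-divisor on $K_{i-1}^{\oplus a_i}$, so $y = 0$. (One may equivalently apply the snake lemma to the endomorphism given by multiplication by $x$ on the short exact sequence, concluding that the kernel of multiplication by $x$ on $K_i$ injects into the corresponding kernel on the quotient, which vanishes.) I do not expect any genuine obstacle; the only point requiring a little care is to handle the case $n_i = 0$ separately, since Lemma \ref{l1}(i) is stated only for syzygies of positive order.
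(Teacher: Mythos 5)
Your proof is correct and takes essentially the same approach as the paper: induction on $i$, establishing that $x$ is a non zero-divisor on both outer terms of the short exact sequence (by the inductive hypothesis and Lemma \ref{l1}(i)), and deducing the same for the middle term $K_i$. The paper expresses this last deduction through the vanishing of $\Tor_1^R(-, R/xR)$ in the long exact sequence of Tor, while you argue directly with elements (or the snake lemma); these are the same observation, and your explicit handling of the $n_i = 0$ case is a reasonable bit of extra care.
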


\begin{proof} We proceed by induction on $i$. If $i=0$, then the claim is just the hypothesis. Hence we assume $i\geq 1$. Then, by the induction hypothesis, it follows that $x$ is a non-zero divisor on $K_{i-1}$. Thus, tensoring the given short exact sequences by $R/xR$, for each $i=1, \ldots, r$, we obtain an exact sequence of the form $\Tor_1^R(K_{i-1}^{\oplus a_i}, R/xR) \to \Tor_1^R(K_i,R/xR) \to \Tor_1^R(\syz^{n_i}_R K_{i-1}^{\oplus b_i}, R/xR)$. This yields $\Tor_1^R(K_i,R/xR) =0$ and hence shows that $x$ is a non zero-divisor on $K_i$, as required.
\end{proof}

We are now ready to give a proof of the third proposition:

\begin{proof} [Proof of Proposition \ref{mainprop}] There is nothing to prove if $\rGdim_R(M)=\infty$. Therefore we assume $\rGdim_R(M)=r<\infty$. Then, by definition, there exist short exact sequences of $R$-modules:
\begin{equation} \tag{\ref{mainprop}.1}
0 \to K_{i-1}^{\oplus a_i} \to K_i \to \syz^{n_i}_R K_{i-1}^{\oplus b_i} \to 0,
\end{equation}
where $K_0=M$ and $\Gdim_R(K_r)<\infty$.

Tensoring the short exact sequences in (\ref{mainprop}.1) by $R/xR$, we obtain the following exact sequences:
\begin{equation} \tag{\ref{mainprop}.2}
\Tor_1^R(\syz^{n_i}_R K_{i-1}^{\oplus b_i},\overline{R}) \to \big(\overline{K_{i-1}}\big)^{\oplus a_i} \to \overline{K_i} \to \overline{\big(\syz^{n_i}_R K_{i-1}^{\oplus b_i}\big)} \to 0,
\end{equation}
where $\overline{(-)}$ denotes $-\otimes_R R/xR$. 

Note that Lemma \ref{l1}(ii) implies $\overline{\big(\syz^{n_i}_R K_{i-1}^{\oplus b_i}\big)} \cong \syz^{n_i}_{\; \overline{R}} \big(\overline{K_{i-1}}\big)^{\oplus b_i}$. Note also that, by Remark \ref{rmk1}, $x$ is a non zero-divisor on each $K_i$ and hence $\Tor_1^R(\syz^{n_i}_R K_{i-1}^{\oplus b_i},\overline{R})$ vanishes for each $i=1, \ldots r$. Therefore, for each $i=1, \ldots r$, (\ref{mainprop}.2) yields the following exact sequence of $\overline{R}$-modules:
\begin{equation} \tag{\ref{mainprop}.3}
0 \to \big(\overline{K_{i-1}}\big)^{\oplus a_i} \to \overline{K_i} \to \syz^{n_i}_{\; \overline{R}} \big(\overline{K_{i-1}}\big)^{\oplus b_i} \to 0.
\end{equation}
Note that $\Gdim_{\overline{R}}(\overline{K_r})=\Gdim_R(K_r)<\infty$ \cite[1.4.5]{Gdimbook}. Therefore, we conclude by using the short exact sequences in (\ref{mainprop}.3) that $\rGdim_{\overline{R}}(M/xM)\leq r$.
\end{proof}

\section*{Acknowledgements}
The authors are grateful to Hiroki Matsui for his help on the proof of Theorem \ref{mainthm}(i), and to Arash Sadeghi for explaining them the argument given in \ref{Holm's}. The authors also thank Henrik Holm and Ryo Takahashi for helpful comments during the preparation of the manuscript.

\end{document}